\documentclass[12pt,reqno]{amsart}
\usepackage[centertags]{amsmath}
\usepackage{amsfonts,amssymb}
\usepackage{mathrsfs}

\theoremstyle{plain} 

\newtheorem{thm}{Theorem}

\theoremstyle{definition}

\theoremstyle{remark}

\numberwithin{equation}{section}

\DeclareMathSymbol{\R}{\mathalpha}{AMSb}{"52}
\DeclareMathSymbol{\C}{\mathalpha}{AMSb}{"43}

\newcommand{\comment}[1]{}

\newcommand{\fq}{\mathfrak{q}}


\newcommand{\beqn}{\begin{equation}}
\newcommand{\eeqn}{\end{equation}}
\newcommand{\balign}{\begin{align}}
\newcommand{\ealign}{\end{align}}
\newcommand{\bsube}{\begin{subequations}}
\newcommand{\esube}{\end{subequations}}



\begin{document}

\title[]{On  point transformations of linear equations of maximal symmetry}
\author[]{ JC Ndogmo}

\address{PO Box 728, Cresta 2118\\ South Africa
}
\email{ndogmoj@yahoo.com}

\begin{abstract}
 An effective method for generating  linear equations of maximal symmetry in their much general  normal form is obtained. In the said normal form, the coefficients of the equation are differential functions of the coefficient of the term of third highest order. As a result, an explicit expression for the point transformation reducing the equation to its canonical form is obtained, and a simple formula for the expression of the general solution in terms of those of the second-order source equation is recovered. New expressions for the general solution are also obtained, as well as a direct proof of the fact that a linear equation is iterative if and only if it is reducible to the canonical form by a point transformation. New classes of solvable equations parameterized by arbitrary functions are also derived, together with simple algebraic expressions for the corresponding general solution.
\end{abstract}

\keywords{Generating equations of maximal symmetry, Characterization of canonical forms, Transformation to canonical form, General solution}
\subjclass[2010]{35A24, 58D19, 65F10}
%
\maketitle

\section{Introduction}
\label{s:intro}
In a short paper published by Krause and Michel \cite{KM} in 1988 certain specific properties of linear equations of maximal symmetry were established. In particular, that paper shows that such equations are precisely the iterative ones, and equivalently those which can be reduced to the canonical form $y^{(n)}=0$ by a point transformation. However that short paper left a number of important questions unanswered. It does not provide for instance any expression for the point transformation mapping a given equation of maximal symmetry to the canonical form. On the other hand it also makes use without proof of a number of important results which were visibly already known but which are difficult to find in the literature. \par

Almost at the same time the problem of generation of equations of maximal symmetry was considered by Mahomed and Leach \cite{ML} who found an algorithm for obtaining expressions for the most general normal form of this equations based on the direct computation of the symmetry algebra. However, computations with this algorithm remain quite tedious and the authors managed to provide a general expression for the linear equations of maximal symmetry only up to the order eight.\par

A more direct algorithm for generating this class of equations based on the simple fact that they are iterative  was proposed recently in \cite{JF}. Nevertheless, some of the main results of that paper concerning in particular the generation and the point transformations of the class of linear equations of maximal symmetry still have room for improvements. \par

 In this paper, we provide a much simpler differential operator than that provided in \cite{JF} for generating linear iterative equations of a general order. As a result, we give a more direct proof than that of Krause and Michel \cite{KM} to the fact that linear equations reducible by an invertible point transformation to the form $y^{(n)}=0,$ and which will henceforth be refereed to as the canonical form,  are precisely those which are iterative. We also establish several results concerning the solutions of this class of equations, and in particular their transformation to canonical form. In contrast to the very well-known paper by Ermakov \cite{ermakov} who managed to find only some very specific cases of a very restricted class for which the second-order source equation is solvable, we provide full classes parameterized by arbitrary functions and for which the solution of the second-order source equation, and consequently of the entire class of equation of maximal symmetry, is given by  simple formulas.
 \comment{
 As usual in Lie theory, all the functions we shall considered will be assumed to be sufficiently smooth.
}

\section{Iterations of linear equations}
\label{s:iterat}
Let $r\neq0$ and $s$ be two smooth functions of $x,$  and consider the differential operator $\Psi = r \frac{d}{dx} + s.$ We shall often denote by $F[x_1, \dots, x_m]$ a differential function of the variable $x_1, \dots, x_m.$ Linear iterative equations are the iterations $\Psi^n [y]=0$ of the first-order linear ordinary differential equation $\Psi [y] \equiv r y'+ s y=0, $ given by
\beqn \label{iterdef}
\Psi^n [y] = \Psi^{n-1}(\Psi [y] ),\quad \text{ for $n\geq 1\quad$ with }\quad \Psi^0 =I,
\eeqn

where $I$ is the identity operator. A linear iterative equation of a general order $n$ thus has the form
\begin{equation}
\label{geniter}
\Psi^n [y] \equiv K_n^0\, y^{(n)} + K_{n}^1\, y^{(n-1)} +  K_{n}^2\, y^{(n-2)} + \dots +  K_{n}^{n-1}\, y' +  K_{n}^n\, y =0.
\end{equation}
Setting
\begin{equation}\label{knj=0}
K_m^j=0, \quad \text{for $j<0$ or $j > m$}, \text{ and }\quad   K_m^j=1, \quad \text{for $m=j=0$}
\end{equation}
and applying \eqref{iterdef} shows that the coefficients $K_n^j$ in the general expression \eqref{geniter} of an iterative equation satisfy the recurrence relation
\begin{equation} \label{gnlrec}
K_n^j= r K_{n-1}^{j} + \Psi K_{n-1}^{j-1}, \qquad \text{for $0 \leq j \leq n,$ and $n \geq 1$}.
\end{equation}
Moreover, setting $j=0$ or $j=n$ in \eqref{gnlrec} shows by induction on $n$ that
\begin{equation}\label{kn0knn}
K_{n}^0= r^n, \qquad K_n^n = \Psi^{n-1} [s], \qquad \text{for all $n \geq 1$},
\end{equation}
and applying \eqref{gnlrec} recursively and using the conventions set in \eqref{knj=0} give the new recurrence relation
\begin{equation} \label{recknj1}
K_n^j = \sum_{k=j}^n r^{n-k} \Psi K_{k-1}^{j-1}, \quad \text{for $j=0,\dots, n$ and $n \geq1.$}
\end{equation}
 We note that \eqref{recknj1} provides an algorithm for the computation of the coefficients $K_n^j$ in terms of the parameters $r$ and $s$ of the source equation and the operator $\Psi,$ and the resulting formula has effectively been obtained in \cite[Theorem 2.2]{JF}. Moreover, it is of course also possible to compute the $K_n^j$ directly in terms of $n$ and the parameters $r$ and $s,$ and for $j=1,2$ we find that
\begin{subequations}\label{kn12}
\begin{align}
K_{n}^1 &= r^{n-1} \left[ n s + \binom{n}{2} r' \right] \label{kn12a}  \\
K_n^2  &=  r^{n-2} \left[ \binom{n}{2} \Psi [s] + \binom{n}{3} \left( 3 s r' + r r''
    + \frac{3n-5}{4} r'^2 \right) \right]. \label{kn12b}
\end{align}
\end{subequations}
If we divide through the general $n$th order linear iterative equation $\Psi^n [y]=0$ in \eqref{geniter} by $K_n^0= r^n,$ it takes the form
\bsube \label{iterstd}
\begin{align}
0 &= y^{(n)} + B_n^1\, y^{(n-1)}+ \dots + B_n^j\, y^{(n-j)} + \dots + B_n^n\, y, \label{iterstd1} \\
B_n^j&= K_n^j / r^n.  \label{iterstd2}
\end{align}
\esube
It is clear that this equation represents the standard form of the general linear iterative equation with leading coefficient one. Moreover, the well known change of the dependent variable $y \mapsto y \exp\left(\frac{1}{n} \int_{x_0}^x B_n^1 (t)\, dt \right)$ maps \eqref{iterstd1} into its normal form in which the coefficient of the term of second highest order has vanished. This transformation however simply amounts to setting $B_n^1=0,$ i.e. $K_n^1 =0.$ Therefore, for given parameters $r$ and $s$ of the operator $\Psi,$ an $n$th order linear equation in normal form
\begin{subequations} \label{iternor}
\begin{align}
&y^{(n)} + A_n^2\, y^{(n-2)} + \dots + A_n^j\, y^{(n-j)} + \dots A_n^n\, y=0  \label{iternor1}\\
\intertext{is iterative if and only if }\vspace{-10mm}
& A_n^j= \frac{K_n^j}{r^n}\Bigg \vert_{K_n^1=0} , \qquad (2 \leq j \leq n), \label{iternor2}
\end{align}
\end{subequations}
where $K_n^j$ is given by ~\eqref{recknj1}. It follows from \eqref{kn12a} that the requirement that $K_n^1=0$ holds is equivalent to having
\begin{equation} \label{s(x)}
s = - \frac{1}{2} (n-1) r',
\end{equation}
and this shows in particular that any iterative equation in normal form can be expressed
in terms of the parameter $r$ alone, i.e. depends on a single arbitrary function. Clearly, the coefficients $A_n^j$ can also be expressed solely in terms of $n,$ $r$ and the derivatives of $r.$ For instance, by setting for any given function $\xi,$
\beqn \label{A(r)}
\mathcal{A}(\xi)(x) = \frac{[\xi'(x)]^2 - 2 \xi(x) \xi''(x) }{ 4 [\xi(x)]^2}
\eeqn
it follows from \eqref{kn12b} that in \eqref{iternor} we have
\beqn \label{an2}
A_2^2 = \mathcal{A}(r), \quad \text{ and more generally }\quad A_n^2 = \binom{n+1}{3} \mathcal{A}(r).
\eeqn
In fact, as already noted in \cite{ML, JF}, the coefficients $A_n^j$ depend only on the function $\mathcal{A}(r)= A_2^2$ and its derivatives. For simplicity, it will often be convenient to denote the coefficient $A_2^2$ of the term of third highest order in \eqref{iternor1} simply by $\fq.$ \par

Having noted that the coefficients of every linear equation of maximal symmetry in normal form depend solely on $\fq$ and its derivatives, an important problem considered in \cite{ML} was to find a linear ordinary differential operator $\Gamma_n [y] $ \emph{depending solely on} $\fq$ and its derivatives, and which generates  the most general form of the linear $n$th order equation of maximal  symmetry in the dependent variable $y=y(x).$  In a recent paper \cite{JF}, it was established that for an arbitrary parameter $r$ of the source equation, the operator
\begin{equation} \label{phin}
\Phi_n = \frac{1}{r^n} \Psi^n \Big \vert_{K_n^1 =0}
\end{equation}
generates the linear iterative equation of an arbitrary order $n$ in normal form and in its most general form \eqref{iternor}.  Therefore, although the operator \eqref{phin} and the equation it generates depend explicitly on $r$ (and not on $\fq = A_2^2$) and its derivatives, on the basis of the result of \cite{KM} identifying linear iterative equations with linear equations of maximal symmetry, the two equations  $\Gamma_n [y] =0$ and $ \Phi_n [y]=0$ should always be the same for all $n,$ although such an operator $\Gamma_n[y]$ has not yet been found. This is due in part to the fact that a general expression for $A_n^j= A_n^j[\fq]$ is not available for all $n$ and $j.$ Nonetheless, it is naturally possible to make use of the  differential operator $\Phi_n$ to generate directly a linear equation of maximal symmetry of the general form \eqref{iternor1} in which the coefficients $A_n^j$ depend only on $\fq$ and its derivatives.\par

Indeed, for any value of $n \geq 2$ it follows from the equations \eqref{recknj1}  and \eqref{iternor}-\eqref{an2} and a simple induction on $j \geq 2$ that each coefficient $A_n^j$ in \eqref{iternor} depends linearly on $r^{(j)}.$  Moreover, it follows from \eqref{A(r)}-\eqref{an2} that
\beqn \label{rulr(j)}
r^{(j)} = D_x^{j-2}  \left( \frac{r'^{\,2} - 4 \fq r^2}{2 r}\right):=F_j[r, \fq], \qquad \text{ for $j \geq 2,$}
\eeqn
where $D_x= d/dx.$ Therefore, applying the substitution \eqref{rulr(j)} to $\Phi_n [y]$  yields the desired equation, that is the linear equation of the form \eqref{iternor}, in which $A_n^j= A_n^j[\fq]$ depends only on $\fq$ and its derivatives.  More formally, the resulting differential operator can be represented as
\beqn \label{phin1}
\Phi_n^r = \frac{1}{r^n} \Psi^n \Big \vert_{K_n^1 =0, \;{\scriptsize \begin{cases}r^{(j)} = F_j[r, \fq]& \\ j \geq 2 &  \end{cases} }} = \Phi_n \Big \vert_{{\scriptsize \begin{cases}r^{(j)} = F_j[r, \fq]& \\ j \geq 2 &  \end{cases} }}.
\eeqn
For example, for $n=3$ or $4,$ evaluating  $\Phi_n [y]$ yields the following expressions directly in terms of $r$ and its derivatives alone.
\bsube \label{phin34}
\begin{align}
\Phi_3 [y] &= -\frac{y \left(r'^3-2 r r' r''+r^2 r^{(3)}\right)}{r^3}+ \frac{y' \left(r'^2-2 r r''\right)}{r^2}+y^{(3)} \\
\begin{split}\Phi_4 [y] &=  \frac{3 y \left(27 r'^4-68
r r'^2 r''+24 r^2 r' r^{(3)}+4 r^2 \left(7 r''^2-2 r r^{(4)}\right)\right)}{16 r^4}\\
 &\quad -\frac{5 y' \left(r'^3-2 r r' r''+r^2 r^{(3)}\right)}{r^3}+
\frac{5 \left(r'^2-2 r r''\right) y''}{2 r^2} +y^{(4)}.
\end{split}
\end{align}
\esube
However, if in addition we also apply to these expressions for $\Phi_n [y]$  the substitution \eqref{rulr(j)}, which amounts to applying directly the operator $\Phi_n^r$ to $y$ we obtain
\bsube \label{phin34r}
\begin{align}
\Phi_3^r [y]=&  2 \fq'y + 4 \fq y' + y''' \label{o3}\\
\Phi_4^r [y]=& 3 y \left(3 \fq^2+\fq''\right)+ 10 y' \fq'+10 \fq y''+y^{(4)} \label{o4}.
\end{align}
\esube
Comparing this with the known expressions for linear equations of maximal symmetry expressed solely in terms of $\fq$ and its derivatives \cite{ML, JF} shows that $\Phi_3^r$ and  and $\Phi_4^r$  yield indeed the indicated expressions.\par

If we set $r= u^2$ for a certain nonzero function  $u,$ the expression for $\mathcal{A}(r)= \mathcal{A}(u^2)$ in \eqref{A(r)} is much simpler and reduces to $-u''/u.$ Setting $\fq = \mathcal{A}(r)$ is thus equivalent to letting $u$ be a solution of the equation
\beqn \label{srce}
y'' + \fq y=0,
\eeqn

which is referred to as the second-order source equation for \eqref{iternor}. Thus, if we express $\Phi_n [y]$ with $r$ replaced by $u^2,$ the substitution rule for $u^{(j)}$ similar to that given for $r^{(j)}$ in \eqref{rulr(j)} takes the much simpler form
\beqn \label{ruluj}
u^{(j)} =  D_x^{j-2} \left( - u \fq\right):=H_j[u, \fq], \qquad \text{ for $j \geq 2.$}
\eeqn

Denote by $\Theta_n^u$ the operator $\Phi_n$ in which $r$ is replaced with $u^2$ and to which the substitution \eqref{ruluj} is applied. That is,

\beqn \label{phinu}
\Theta_n^u =\Phi_n \Big \vert_{{\scriptsize \begin{cases}r= u^2 &  \\ u^{(j)} = H_j[u, \fq],  &  j \geq 2 \end{cases} }}.
\eeqn
That is, $\Theta_n^u$ is just  $\Phi_n$ as in \eqref{phin}, in which the source parameter $r$ has been replaced by $u^2$ and to which the substitution \eqref{ruluj} is then applied.  Then for the same reasons that $\Phi_n^r[y, \fq]$ generates the most general form of linear iterative equations, $\Theta_n^u [y, \fq]$ also does the same. However, in view of the most simpler substitution rule \eqref{ruluj}, the computational cost for generating these equations is much lower using $\Theta_n^u$ rather than $\Phi_n^r.$ Indeed for arbitrary values of $\fq$, generating equations of maximal symmetry of order greater than ten has been up to now a very tedious task using the original algorithm of \cite{ML}, but it is now easy to generate such equations up to the order thirty in most standard computers using computing systems such as {\sc mathematica}. For instance, the coefficient $A_{15}^{15}$ of $y$ which is the largest expression in size in the equation $\Theta_{15}^u [y]=0$ is given by
\begin{equation*}
\begin{split}
A_{15}^{15}=& 14 \big(2 \big(52022476800 \fq^6 \fq'+2132810240 \fq'^5+6656237568 \fq^5 \fq^{(3)}\\
& +341232100 \fq'^3 \fq^{(4)} +1024 \fq^4\left(73853676 \fq' \fq''+286397  \fq^{(5)}\right) \\
&+32 \fq^3 \big(2646561024 \fq'^3+207959056 \fq'' \fq^{(3)}+123346720 \fq' \fq^{(4)} \\
&+184297 \fq^{(7)}\big) +2 \fq'^2 \left(863911980 \fq'' \fq^{(3)}+784597 \fq^{(7)}\right)\\
&+  8 \fq^2 \big(2094143648 \fq'^2 \fq^{(3)}+33615550 \fq^{(3)} \fq^{(4)} +22652990 \fq'' \fq^{(5)} \\
&+ \fq' \left(2835404512 \fq''^2+9750858 \fq^{(6)}\right)+7323 \fq^{(9)}\big)\\
&+13 \big(457296 [\fq^{(3)}]^3+\fq^{(5)} \left(740970 \fq''^2+2717 \fq^{(6)}\right)+2068 \fq^{(4)} \fq^{(7)}\\
&+2 \fq^{(3)} \left(1100540 \fq'' \fq^{(4)}+591 \fq^{(8)}\right)  +491 \fq'' \fq^{(9)}\big)+\fq' \big(780095196 \fq''^3\\
&+8478360 [\fq^{(4)}]^2+14231306 \fq^{(3)} \fq^{(5)}+8279146 \fq'' \fq^{(6)}+1793 \fq^{(10)}\big)\\
&+2 \fq \big(12696730880 \fq'^3 \fq''+151883460 \fq'^2 \fq^{(5)}+2519682 \fq^{(4)} \fq^{(5)}\\
&+2 \fq^{(3)} \left(257461378 \fq''^2+913529 \fq^{(6)}\right) +937436 \fq'' \fq^{(7)}+139 \fq^{(11)} \\
&+4 \fq' \big(95070193 [\fq^{(3)}]^2 +152580285 \fq'' \fq^{(4)}+79587 \fq^{(8)}\big)\big)\big)+\fq^{(13)}\big).
\end{split}
\end{equation*}

Using the operator $\Theta_n^u,$ we shall give in the next section a much simpler and direct proof than that of \cite{KM} that a linear equation is iterative if and only if it is  reducible by a point transformation to the canonical form. To close this section, we note that the Wronskian of any two linearly independent solutions $u$ and $v$ of the source equation \eqref{srce} is a nonzero constant and will be normalized to one. Nontrivial solutions of the second-order equation \eqref{srce} are not known for arbitrary values of the coefficient $\fq.$

\section{Point transformations}
\label{s:transfo}


The equivalence group of linear $n$th order equations in normal form \eqref{iternor1} is well known to be given by invertible point transformations of the form
\begin{equation}\label{eqvnor}
x = f(z), \qquad  y= \lambda [f'(x)]^{\frac{n-1}{2}} w,
\end{equation}
where $f$ \comment{f to \xi ?} is an arbitrary locally invertible function and $\lambda$  an arbitrary nonzero constant \cite{schw-eqv, jcnRW, jcnJAM}. In other words, an equation of the form \eqref{iternor1} is reducible to the canonical form if and only if there exist a transformation of the form \eqref{eqvnor} that maps such an equation to the canonical form.\par

 Let us denote by
\beqn \label{schwarzn}
S(\xi)(z)= \frac{( - 3 \xi''^2 + 2 \xi' \xi^{(3)})}{2\, \xi'{\,^2}}
\eeqn
the Schwarzian derivative of the function $\xi=\xi(z).$ By studying the expression of the source parameter of the transformed equation under equivalence transformations, a simple characterization of the point transformation that maps an iterative equation to the canonical form  was found in
\cite[Theorem4.3]{JF}. This result states that a point transformation reduces a given iterative equation, which without loss of generality may be assumed to be of the form  \eqref{iternor1}, to the canonical form $w^{(n)}(z)=0$ if and only if it is of the form \eqref{eqvnor}, where $f$ is the inverse of the function $z= h(x)$ satisfying
\begin{equation} \label{detAn2} A_2^2 (x) = \frac{1}{2} S (h)(x),\quad \text{ or equivalently, }\quad \mathcal{A}(r)(x) = \frac{1}{2} S (h) (x), \end{equation}
on account of \eqref{an2}, assuming that $r$ is the source parameter of the equation. By making use of this result of \cite{JF}, we can now provide a more direct proof than that given in \cite{KM} for the following result.

\begin{thm} \label{iter-reduc}
A linear equation is iterative if and only if it can be reduced to the canonical form by an invertible point transformation.
\end{thm}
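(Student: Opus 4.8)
The plan is to establish the two implications by quite different means: the forward implication (iterative $\Rightarrow$ reducible) will rest on the characterization \eqref{detAn2} together with the solvability of the source equation, while the converse (reducible $\Rightarrow$ iterative) will follow from reading the reduction as an algebraic conjugation of differential operators.

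For the forward direction, I would place the given iterative equation in its normal form \eqref{iternor1}, with source parameter $r$ and $\fq = A_2^2 = \mathcal{A}(r)$. By \eqref{detAn2}, a transformation of the form \eqref{eqvnor} sends the equation to $w^{(n)}(z)=0$ exactly when the reparametrization $h = f^{-1}$ obeys $\mathcal{A}(r) = \tfrac12 S(h)$, i.e. $S(h) = 2\fq$. It therefore suffices to produce such an $h$. Writing $r = u^2$ as in \eqref{srce}, so that $u$ solves the source equation $u'' + \fq u = 0$, I would take a second, linearly independent solution $v$ (which exists for smooth $\fq$, with Wronskian normalized to one as recalled at the close of Section \ref{s:iterat}) and set $h = v/u$. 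The remaining step is the classical computation $S(v/u) = -2u''/u = 2\fq$, valid for any ratio of solutions of $y'' + \fq y = 0$; this supplies the desired $h$ and hence the reducing transformation.

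For the converse, let a linear equation be carried to the canonical form $w^{(n)}(z)=0$ by an invertible point transformation. Since the canonical form is itself in normal form, I would first reduce the given equation to normal form by the multiplier change $y \mapsto y\exp\!\bigl(\tfrac1n\int_{x_0}^x B_n^1\bigr)$---a special point transformation that preserves both iterativeness and reducibility---so that, by the structure of the equivalence group recalled at the start of Section \ref{s:transfo}, the reducing map may be taken of the form \eqref{eqvnor}. Setting $z = h(x)$ with $h = f^{-1}$ and $\mu = \lambda\,[f']^{(n-1)/2}$, so that $w = \mu^{-1}y$, I would exploit $D_z = \tfrac{1}{h'}D_x$ to write $D_z^{\,n}$ as the $n$th iterate of the first-order operator $\tfrac{1}{h'}D_x$ and then conjugate by the multiplier:
\begin{equation*}
D_z^{\,n}[\mu^{-1}y] = \mu^{-1}\bigl(\mu\,\tfrac{1}{h'}D_x\,\mu^{-1}\bigr)^{n}[y],
\end{equation*}
using the elementary identity $\mu A^{n}\mu^{-1} = (\mu A\mu^{-1})^{n}$. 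A direct computation gives $\widetilde{\Psi} := \mu\,\tfrac{1}{h'}D_x\,\mu^{-1} = \tfrac{1}{h'}D_x - \tfrac{\mu'}{\mu h'}$, which is again first-order of the form $r D_x + s$ with $r = 1/h' \neq 0$. Hence $w^{(n)}=0$ is equivalent to $\widetilde{\Psi}^{\,n}[y] = 0$, which is iterative by the definition \eqref{iterdef}.

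I expect the main obstacle to sit in the converse direction, namely in justifying that the geometric transformation \eqref{eqvnor} is faithfully captured by the two algebraic operations of reparametrization ($D_z = \tfrac{1}{h'}D_x$) and conjugation by the multiplier $\mu$, and in confirming that the conjugated operator $\widetilde{\Psi}$ retains the first-order shape $rD_x + s$ demanded by the definition of an iterative equation; the identity $\mu A^{n}\mu^{-1}=(\mu A\mu^{-1})^{n}$ is what makes the $n$th power pass cleanly through the conjugation. The forward direction, by contrast, is essentially immediate once existence of a solution to $S(h)=2\fq$ is traced, through $r=u^2$, back to the solvability of the second-order source equation \eqref{srce}, the only genuine ingredient being the standard Schwarzian identity for a ratio of its solutions.
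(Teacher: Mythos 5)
Your proof is correct, and while the forward direction coincides with the paper's, your converse takes a genuinely different (and more self-contained) route. For the forward implication both you and the paper invoke the characterization \eqref{detAn2} from Theorem~4.3 of \cite{JF}; the paper takes $h=\int\frac{dx}{r}$ and notes it solves \eqref{detAn2}, while you take $h=v/u$ and verify $S(v/u)=2\fq$ by the classical Schwarzian identity for a ratio of solutions of \eqref{srce} --- these are the same function, since the Wronskian normalization gives $\int\frac{dx}{u^2}=\frac{v}{u}$, so this is only a cosmetic difference. The real divergence is in the converse: the paper disposes of both implications at once by \emph{asserting} the operator identity \eqref{iter-transfR} (``it follows that we have exactly''), i.e.\ that generating iterative equations via $\Phi_n$ and transforming $w^{(n)}(z)=0$ by the equivalence map with $h=\int\frac{dx}{r}$ produce the same one-parameter family, without presenting the computation behind it. Your conjugation argument
\begin{equation*}
D_z^{\,n}\bigl[\mu^{-1}y\bigr]=\mu^{-1}\Bigl(\mu\,\tfrac{1}{h'}D_x\,\mu^{-1}\Bigr)^{n}[y],
\qquad
\mu\,\tfrac{1}{h'}D_x\,\mu^{-1}=\tfrac{1}{h'}D_x-\tfrac{\mu'}{\mu h'},
\end{equation*}
supplies exactly the missing derivation: it shows directly that any equation obtained from the canonical form by a transformation \eqref{eqvnor} is $\widetilde{\Psi}^{\,n}[y]=0$ (up to the nonzero factor $\mu^{-1}$) for a first-order operator $\widetilde{\Psi}=rD_x+s$ with $r=1/h'\neq 0$, hence iterative by \eqref{iterdef}. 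Indeed, specializing to $h=\int\frac{dx}{r}$ gives $1/h'=r$ and $-\mu'/(\mu h')=-\tfrac{n-1}{2}r'$, which is precisely the normal-form condition \eqref{s(x)}, so your computation recovers the paper's identity \eqref{iter-transfR1} as a corollary. In short: the paper's formulation is more symmetric and exhibits the correspondence $r\leftrightarrow f$ between the two families explicitly, but your version makes the converse implication rigorous by an elementary operator identity and makes transparent \emph{why} the transformed equation is iterative. The only points to keep an eye on --- shared with the paper --- are that writing $r=u^2$ presumes $r>0$ (or a complex square root), and that restricting to transformations of the form \eqref{eqvnor} relies on the cited structure of the equivalence group of equations in normal form.
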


\begin{proof}
As usual one may assume that the iterative equation is in its normal form and has source parameter $r.$ On the basis of the above stated result from Theorem 4.3 of \cite{JF}, it follows that the transformation \eqref{eqvnor} where $f$ is the inverse of the function $h= \int \frac{dx}{r}$ maps the iterative equation to its canonical form, as the latter expression for $h$ solves ~\eqref{detAn2}. More directly, one can prove that the class of iterative equations and that of equations reducible by an invertible point transformation to the canonical form are the same. Indeed, let $\Omega_n [f, \lambda, x, y]\equiv \Omega_n[f, \lambda]$ represent the element of the equivalence group of linear $n$th order equations $\Delta\equiv\Delta_n [y]=0$ in normal form, acting on the space of independent variable $x$ and dependent variable $y,$ and given by \eqref{eqvnor}. Denoting by $\Omega_n[f, \lambda] \cdot \Delta=0$ the transformed equation, it follows that we have exactly
\bsube \label{iter-transfR}
\begin{align}
\Phi_n [y] & =  K_n(\lambda, r)\; \Omega_n \left[\int \frac{dx}{r}, \lambda, z, w \right]\cdot w^{(n)} (z) \label{iter-transfR1}\\[-2mm]
\intertext{\vspace{-2mm} where}
 K_n(\lambda, r) &= 1/\left( \lambda [r(x)]^{(n+1)/2}\right).
\end{align}
\esube
In other words,  generating a linear iterative equation in $y=y(x)$ for any given value of $r$ using the differential operator $\Phi_n$ and transforming the canonical equation $w^{(n)}(z)=0$ in the new variables $x$ and $y$ using the point transformation operator $K_n(\lambda, r)\; \Omega_n \left[\int \frac{dx}{r}, \lambda, z, w \right]$ yield exactly the same equations, and this proves the result.
\end{proof}

In \cite{KM} Krause and Michel proved this theorem indirectly as a consequence of equivalence relations, by proving the equivalence between equations of maximal symmetry and iterative equations on one hand, and between equations reducible by invertible point transformations and equations of maximal symmetry on the other hand. The part in that proof stating that every equation reducible to the canonical form has maximal symmetry is due to Lie \cite{liecanonic}. The direct proof given here will allow us to construct various point transformations of practical importance for iterative equations as well as various forms of their general solutions.\par

It should first be noted that the right hand side of \eqref{iter-transfR1} gives another methods for generating linear iterative equations with source parameter $r.$  In practice however, linear iterative equations arise generally with no reference to any source parameter but are expressed solely in terms of the coefficient $\fq = A_2^2(x)$ and its derivatives. In this case, on the basis of a remark made in the previous section, a solution $h$ to the equation $\fq = \frac{1}{2}S(h)(x)$ in \eqref{detAn2} is simply given $h= \int \frac{d x}{u^2}$  where $u$ is a solution of the second-order source equation \eqref{srce}. Consequently, if we denote by $\Omega_n^u$ the transformation operator given by

\beqn \label{omega_n^u}
\Omega_n^u\left[\int \frac{dx}{u^2}, \lambda, z, w \right] = K_n(\lambda, u^2)\;  \Omega_n \left[\int \frac{dx}{u^2}, \lambda, z, w \right] \Big \vert_{{\scriptsize \begin{cases}u^{(j)} = H_j[u, \fq]& \\ j \geq 2 &  \end{cases} }},
\eeqn
where $H_j[u \fq]= D_x^{j-2}(-u \fq)$ as in \eqref{ruluj}, then $\Theta_n^u [y]$ and $\Omega_n^u \cdot w^{(n)}(z)$ generate exactly the same linear iterative equation expressed solely in terms of $\fq$ and its derivatives.\par

On the other hand, since the transformation operator $\Omega_n^u \left[\int \frac{dx}{u^2}, \lambda, z, w \right]$ maps the reduced equation to the most general form \eqref{iternor1} of the iterative  equation, its explicit expression can be used to derive the general solution of \eqref{iternor1} . Indeed, under this operator we have
\beqn \label{y-of-u}
y= \frac{1}{\lambda} u^{n-1} w,\qquad z= \int \frac{d x}{u^2}.
\eeqn
Consequently, $n$ linearly independent solutions to the general $n$th order iterative equation \eqref{iternor1} are given by
\beqn \label{sol-iter1}
   y_k = u^{n-1} \left( \int \frac{d x}{u^2} \right)^k,\qquad k=0, \dots, n-1.
\eeqn
In particular, if $u$ and $v$ are two linearly independent solutions of the source equation \eqref{srce}, then $\int \frac{d x}{u^2}= \frac{v}{u}.$ Consequently, in terms of $u$ and $v,$ the $n$ linearly independent solutions to \eqref{iternor1} in \eqref{sol-iter1} above can be rewritten as
\beqn \label{sol-iter2}
y_k= u^{n-1-k} v^{k}, \qquad k=0,\dots,n-1.
\eeqn

Formula \eqref{sol-iter2} is well known and was cited without proof in \cite{KM}, and it is an important result for which we have not been able to find the proof in the recent literature. Moreover, formula \eqref{sol-iter1} established above provides a much stronger result by showing that linearly independent solutions of \eqref{iternor1} can be expressed solely in terms of a single nonzero solution of the second-order source equation. Indeed, the  solutions $y_k$ in \eqref{sol-iter1} are clearly linearly independent as their Wronskian equals the nonzero constant $\prod_{j=1}^{n-1} j!.$

\begin{thm} \label{characsol}
A linear equation is iterative if and only if it has $n$ linearly independent solutions $y_k$ of the form \eqref{sol-iter2}, where $u$ and $v$ are two linearly independent solutions of  the corresponding second-order source equation \eqref{srce}.
\end{thm}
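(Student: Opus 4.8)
The plan is to prove Theorem \ref{characsol} by establishing both implications, leaning on the solution formula \eqref{sol-iter2} already derived in the discussion preceding the statement and on the characterization of iterative equations via reducibility to canonical form (Theorem \ref{iter-reduc}).

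For the forward direction, I would argue that if a linear equation is iterative then, after reducing to normal form with source parameter $r = u^2$ (where $u$ solves the source equation \eqref{srce}), the discussion leading to \eqref{sol-iter2} already exhibits $n$ linearly independent solutions $y_k = u^{n-1-k} v^k$, with $u, v$ a fundamental pair for \eqref{srce}. The linear independence was already settled in the text via the nonzero Wronskian $\prod_{j=1}^{n-1} j!$, so this half is essentially a citation of the preceding construction.

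The converse is the substantive part. Here I would assume the equation has a fundamental system of the form \eqref{sol-iter2} built from two independent solutions $u, v$ of some second-order equation $y'' + \fq\, y = 0$, and must deduce that the equation is iterative. The natural route is to observe that the functions $\{u^{n-1-k} v^k\}_{k=0}^{n-1}$ are exactly the solutions of the canonical equation $w^{(n)} = 0$ pulled back under the point transformation \eqref{eqvnor} with $\lambda = 1$, $f$ the inverse of $z = \int dx / u^2 = v/u$: indeed $y_k = u^{n-1}(v/u)^k = u^{n-1} z^k$, and $\{1, z, \dots, z^{n-1}\}$ is the fundamental system of $w^{(n)}(z) = 0$. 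Thus the given equation and the transform $\Omega_n[f,1]\cdot w^{(n)}(z)$ share the same $n$-dimensional solution space, hence coincide (a monic linear operator of order $n$ is determined by its kernel). Therefore the equation is reducible to canonical form by an invertible point transformation, and by Theorem \ref{iter-reduc} it is iterative.

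The main obstacle I anticipate is justifying the step where two equations sharing an $n$-dimensional solution space must be identical. This requires knowing that each $y_k$ is nonvanishing enough for the transformation \eqref{eqvnor} to be well defined (i.e. $u \neq 0$ and $z = v/u$ locally invertible, which follows from the constancy of the Wronskian of $u, v$), and that the span of $\{u^{n-1}z^k\}$ genuinely has dimension $n$ so that it determines a unique monic $n$th order operator. Once linear independence is in hand, the uniqueness of the monic operator annihilating a given $n$-dimensional space of functions is standard, so the real care goes into verifying the invertibility of the transformation and that the pulled-back solution space matches the hypothesized one exactly.
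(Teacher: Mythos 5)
Your proposal is correct and takes essentially the same approach as the paper: the forward direction is cited from the construction of \eqref{sol-iter2}, and the converse uses the point transformation determined by $z = v/u$ (equivalently $\int dx/u^2$) to identify the given equation with the canonical equation $w^{(n)}=0$, then invokes Theorem \ref{iter-reduc}. The only differences are cosmetic: the paper pushes the solutions forward (reducing $u$ to $1$ and $v$ to $z$, so the solutions become the polynomials $z^k$), whereas you pull the canonical solutions back, and you spell out the uniqueness step---that a monic $n$th order operator is determined by its $n$-dimensional kernel---which the paper leaves implicit.
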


\begin{proof}
The fact that a linear iterative equation has linearly independent solutions of the stated form is established in \eqref{sol-iter2}. Conversely, if a linear equation has $n$ linearly independent solutions of the stated form \eqref{sol-iter2}, then since the second-order source equation is reducible to the canonical form $w^{(n)}=0$ by a point transformation, without loss of generality we may assume that such a transformation reduces $u$ to 1 and $v$ to $z.$  Consequently, the corresponding solutions of the linear equation are polynomials of degree at most $n-1,$ and thus the transformed equation is in canonical form. It then clearly follows from Theorem \ref{iter-reduc} that the  equation is iterative.
\end{proof}

\section{Concluding remarks}
The results obtained in this paper show amongst others that linear equations of maximal symmetry are highly solvable, because that is indeed the case for the second-order source equation \eqref{srce} whose solutions completely determine those of the linear equations of maximal symmetry. In a very popular paper \cite{ermakov} published in Russian in 1880 and recently translated into English \cite{ermakovL}, Ermakov stated that the majority of second-order linear homogeneous ordinary differential equations for which it is possible to find conditions for their solvability are of the form
\beqn \label{ermakov-solv}
(\alpha_1 x^2 +  \alpha_2 x + \alpha_3 ) y'' + (\alpha_4 x + \alpha_5) y' + \alpha_6 y=0,
\eeqn
where the $\alpha_j$ are some arbitrary constants. He then moved on in the paper to obtain some very specific cases of equations which are solvable from this class, together with their general solutions. A couple of years later in the same decade Hill \cite{hill} considered in the study of lunar stability the most general form of the second-order linear equation in normal form, i.e. in the form \eqref{srce}, but in which the coefficient $\fq$ is a periodic function. Hill's equation as well as its important variants such as Meissner Equation and
Mathieu Equation have been been amply studied \cite{teschl}, and it is well known that its solutions and their related properties can be described by means of Floquet theory. This solutions can also be expressed in terms of Hill's determinant \cite{magnus}.\par

The results of this paper extend even more the scope of solvability of linear second-order equations obtained by Ermakov. Indeed, it follows from \eqref{sol-iter1} and the condition $r=u^2$ relating the parameter of the source equation and the solution of the corresponding second-order  equation, that for \emph{arbitrary} values $r=r (x)$ of the source parameter, two linearly independent solutions of the second-order equation of the form
\beqn \label{eqA(r)}
y'' + \mathcal{A}(r) y= 0
\eeqn
are given by $y_k= \sqrt{r} \left( \int \frac{d x}{r} \right)^k$ for $k=0, 1.$  In passing, we  note that $n$ linearly independent solutions of the corresponding $n$th iteration of \eqref{eqA(r)} can also be obtained from \eqref{sol-iter1} through the substitution $r= u^2.$  To make a much direct comparison with the class of equations \eqref{ermakov-solv} considered by Ermakov,  we note that linearly independent solutions can also be found for the standard form of \eqref{eqA(r)} which in terms of the arbitrary coefficient $B =B(x)$ of $y'$ takes the form
\beqn \label{stdform}
y'' + B y' + \frac{1}{4} (4 \mathcal{A}(r) + B^2 + 2 B')y = 0.
\eeqn
The two linearly independent solutions of \eqref{stdform} are given by
\beqn \label{stdform}
y_j= \sqrt{r} \left( \int \frac{d x}{r} \right)^j e^{-\frac{1}{2} \int B(x) dx}, \qquad j=0,1.
\eeqn
We note that as opposed to the very specific cases of solvable equations found by Ermakov from the much restricted class of equations \eqref{ermakov-solv}, not only \eqref{stdform} depends roughly speaking  on two arbitrary arbitrary functions, but also their solution is given by a very simple formula. One difficulty with the type of solutions found here for equations of maximal symmetry is however that in practice their coefficients are not generally given explicitly in terms if the source parameter $r,$ but solely in terms of $\fq$ and its derivatives, and solving $\mathcal{A}(r)=\fq$ for $r$ is equivalent to solving the corresponding source equation \eqref{srce} through the linearizing transformation $r=u^2.$








\end{document}